\newcommand{\bC}{{\mathbf C}}
\newcommand{\bZ}{{\mathbf Z}}
\newcommand{\Irr}{{{\operatorname{Irr}}}}
\newcommand{\SL}{\operatorname{SL}}
\newcommand{\Syl}{\operatorname{Syl}}
\newtheorem{thm}{Theorem}[section]
\newtheorem{lem}[thm]{Lemma}
\newtheorem{con}[thm]{Conjecture}
\newtheorem{cor}[thm]{Corollary}
\newtheorem*{thmA}{Theorem A}
\newtheorem*{thmB}{Theorem B}
\newtheorem*{thmC}{Theorem C}
\theoremstyle{definition}
\numberwithin{equation}{section}
\def\irr#1{{\rm Irr}(#1)}
\def\cent#1#2{{\bf C}_{#1}(#2)}
\def\sbs{\subseteq}
\begin{document}

\title[$p$-groups and  zeros of characters]{$p$-groups and  zeros of characters}

\author{Alexander Moret\'o, Gabriel Navarro}
\address{Departament de Matem\`atiques, Universitat de Val\`encia, 46100
  Burjassot, Val\`encia, Spain}
\email{alexander.moreto@uv.es}
\email{gabriel@uv.es}

\thanks{We thank Eamonn O'Brien for helpful conversations and computer calculations supporting Conjectures \ref{3max} and \ref{mz}.
We also thank G. A. Fern\'andez-Alcober, G. Malle, B. Sambale and T. Wilde for helpful comments on a previous version of this paper.
The research of both authors is supported by Ministerio de Ciencia
  e Innovaci\'on (Grant PID2019-103854GB-I00 funded by MCIN/AEI/ 10.13039/501100011033). The first author is also supported by Generalitat Valenciana
  CIAICO/2021/163.}

\keywords{}

\subjclass[2010]{Primary 20C15}

\date{\today}

\begin{abstract}
Fix a prime $p$ and an  integer $n\geq 0$. 
Among the non-linear irreducible characters of the
$p$-groups  of order $p^n$, what is the minimum number of elements that take the value 0?
\end{abstract}

\maketitle


\section{Introduction}
Dihedral, semi-dihedral and generalized quaternion groups are ubiquitous in finite group theory.
They have been characterized along the years
in several ways: as the non-cyclic 2-groups whose number of involutions
is 1 modulo 4 (Alperin-Feit-Thompson); as the non-abelian 2-groups whose commutator subgroup has index 4
(O. Taussky-Todd), as the 2-groups of maximal class; or, using fields of values of characters, as the 2-groups with exactly  five rational-valued irreducible
characters (\cite{ins}), for instance.

\medskip

Using zeros of characters, the following is yet another one. It is somewhat remarkable that a finite group
can be characterized by the number of zeros of a single irreducible character.

\begin{thmA}
Suppose that $G$ is a $2$-group of order $2^n$. Let $\chi$ be a  non-linear irreducible complex character of $G$.
Then $\chi(g)=0$ for at least $2^{n-1} + 2$ elements $g \in G$. 
Furthermore, there exists $\chi\in\Irr(G)$  that vanishes at exactly $2^{n-1}+2$ elements if and only if $G$ is dihedral, semidihedral or generalized quaternion.
\end{thmA}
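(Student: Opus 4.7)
The plan has three steps: first I will reduce to $\chi$ faithful via an inflation argument, then prove an exact Galois-orbit identity for $\sum|\chi(g)|^2$ over noncentral, non-vanishing elements, and finally use the equality conditions to force $\chi(1)=2$ and $|Z(G)|=2$, identifying $G$ via the classification of $2$-groups of maximal class. For the reduction, if $N=\ker(\chi)\neq 1$ then $\chi$ inflates from a faithful non-linear irreducible $\bar\chi$ of $G/N$, and the zero set of $\chi$ is the preimage of that of $\bar\chi$, so $z(\chi)=|N|\cdot z(\bar\chi)$; induction on $|G|$ gives $z(\chi)\geq 2^{n-1}+2|N|>2^{n-1}+2$ strictly. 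Hence we may assume $\chi$ faithful, and then $Z(\chi)=Z(G)$ is cyclic.

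Let $\rho$ realize the faithful $\chi$, put $k=\chi(1)=2^a$ with $a\geq 1$, and $T=\{g\in G\setminus Z(G):\chi(g)\neq 0\}$. For $g\in G$ with eigenvalues $\alpha_1,\ldots,\alpha_k$ of $\rho(g)$, group eigenvalues into $\pm$-pairs $P$ with chosen representative $\lambda_P$, and set $m_P(g)=n_{\lambda_P}(g)-n_{-\lambda_P}(g)$, where $n_\lambda(g)$ is the multiplicity of $\lambda$ as an eigenvalue. Expanding $|\chi(g^e)|^2=\sum_{i,j}(\alpha_i/\alpha_j)^e$ and applying the cyclotomic identity $\sum_{e\in(\ZZ/2^m\ZZ)^\times}\zeta_{2^m}^e=\mu(2^m)$ (which vanishes for $m\geq 2$), a direct calculation yields
\[
\sum_{e\in(\ZZ/|g|\ZZ)^\times}|\chi(g^e)|^2=\phi(|g|)\cdot M(g),\qquad M(g):=\sum_P m_P(g)^2.
\]
For $g\in T$, $M(g)$ is a positive integer, and since $\sum_P m_P\equiv\sum_\lambda n_\lambda=k\equiv 0\pmod{2}$ while $m_P^2\equiv m_P\pmod{2}$, we have $M(g)$ even, hence $M(g)\geq 2$. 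Because $M$ is constant on Galois orbits $\{g^e\}$ and such orbits of $T$-elements stay inside $T$, summing over orbits gives $\sum_{g\in T}|\chi(g)|^2\geq 2|T|$.

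Combined with the mass identity $|G|=k^2|Z(G)|+\sum_{g\in T}|\chi(g)|^2$, we obtain
\[
z(\chi)=2^n-|Z(G)|-|T|\geq 2^{n-1}+\tfrac{k^2-2}{2}|Z(G)|\geq 2^{n-1}+(k^2-2),
\]
using $|Z(G)|\geq 2$ (as $G$ is non-abelian). So $z(\chi)\geq 2^{n-1}+2$, with equality forcing $k=2$ and $|Z(G)|=2$. In the equality case, $\rho$ embeds $G$ into $\GL_2(\CC)$ and $G/Z(G)$ into $\PGL_2(\CC)\cong\SO(3)$. Finite $2$-subgroups of $\SO(3)$ are cyclic or dihedral; $G/Z(G)$ is non-cyclic since $G$ is non-abelian, so $G/Z(G)$ is dihedral of order $2^{n-1}$. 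Together with $|Z(G)|=2$ this forces $G$ to be of maximal class, so by the classical classification $G\in\{D_{2^n},SD_{2^n},Q_{2^n}\}$ (with $SD_{2^n}$ occurring only for $n\geq 4$). Conversely, in each of these groups a direct computation on the natural faithful degree-$2$ character yields exactly $2^{n-1}+2$ zeros: the $2^{n-1}$ elements of the non-cyclic coset together with the two elements whose image in $G/Z(G)$ has order $4$.

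The main obstacle is the Galois-orbit identity in the middle paragraph; its proof rests on carefully tracking how the power map $g\mapsto g^e$ acts on $\pm$-pairs of eigenvalues, and on the parity argument forcing $M(g)\geq 2$ on $T$. Once this is in hand, both the lower bound (pinned to $2^{n-1}+2$ on the nose) and the characterization follow.
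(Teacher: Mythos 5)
Your proof is correct, but it takes a genuinely different route from the paper. The paper deduces Theorem A from the general Theorem B (valid for all primes $p$), whose proof is an induction on $|G|$ using the monomiality of $p$-group characters, Mackey restriction of $\chi$ to maximal subgroups, relative $M$-character theory, and a structural lemma to pin down maximal class; the equality analysis there also yields the exact zero set $(G-U)\cup(\bZ_2(G)-\bZ(G))$. Your argument is instead an orthogonality-plus-Galois-averaging argument specific to $p=2$: the identity $\sum_{e}|\chi(g^e)|^2=\phi(|g|)\sum_P m_P(g)^2$ is correct (it follows from $\sum_{e}(\alpha_i/\alpha_j)^e=\phi(|g|)\mu(2^m)/\phi(2^m)$ for $\alpha_i/\alpha_j$ of order $2^m$, which kills all contributions except $\alpha_i=\pm\alpha_j$), the parity argument forcing $M(g)\geq 2$ on $T$ is valid because $\chi(1)$ is even, and combining with $[\chi,\chi]=1$ gives the refined inequality $z(\chi)\geq |G|/2+\tfrac{1}{2}(\chi(1)^2-2)|\zent G|$ for faithful $\chi$, which is a nice quantitative statement in its own right. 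The equality case then correctly forces $\chi(1)=2$ and $|\zent G|=2$, and the $\PGL_2(\CC)$ classification plus the upper-central-series computation (worth one explicit line: $G/\zent G$ dihedral of order $2^{n-1}$ gives $|\bZ_i(G)|=2^i$ for $i\leq n-2$, hence maximal class) identifies $G$. What each approach buys: the paper's induction proves the bound for all $p$ and identifies the precise zero set, while yours is more self-contained and elementary for $p=2$ but does not generalize to odd $p$ (the analogue of the parity step fails, since for odd $p$ one only gets $M(g)\geq 1$). Two small slips: in the converse verification the two extra zeros are the elements of \emph{order $4$ in the cyclic maximal subgroup} (their images in $G/\zent G$ have order $2$, not $4$); and for the semidihedral case the computation that $\varepsilon+\varepsilon^{2^{n-2}-1}=0$ forces $o(\varepsilon)=4$ deserves a line, though it is easy. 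Neither affects correctness.
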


The situation for general $p$-groups is more mysterious, and difficult. The following also includes the harder implication in Theorem A (when $p$ is even).

\begin{thmB}
Let $G$ be a $p$-group of order $p^n$. Let $\chi$ be a non-linear irreducible  character of $G$.
Then $\chi(g)=0$ for at least $p^n-p^{n-1}+p^2-p$ elements $g \in G$. If equality holds, then $G$ is a $p$-group of  maximal class with an abelian maximal subgroup.
\end{thmB}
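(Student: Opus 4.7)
The plan is to prove the theorem by induction on $|G|=p^n$, in three main steps.

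\textbf{Step 1 (Reduction to faithful $\chi$).} Set $V(\chi):=\{g\in G:\chi(g)=0\}$ and let $K:=\ker\chi$ have order $p^j$. Then $\chi$ is inflated from a faithful non-linear character $\bar\chi\in\Irr(G/K)$ (so $G/K$ is non-abelian, hence of order at least $p^3$), and $V(\chi)$ is the preimage of $V(\bar\chi)$ under $G\to G/K$, giving $|V(\chi)|=p^j\cdot|V(\bar\chi)|$. Applying the theorem inductively to $\bar\chi$ on the smaller $p$-group $G/K$ gives $|V(\bar\chi)|\ge p^{n-j}-p^{n-j-1}+p^2-p$, whence $|V(\chi)|\ge p^n-p^{n-1}+p^{j+1}(p-1)$, which strictly exceeds $p^n-p^{n-1}+p^2-p$ for $j\ge 1$. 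Equality thus forces $\chi$ faithful, and I henceforth assume this, so $Z(G)=Z(\chi)$ is cyclic and $T:=G\setminus V(\chi)$ is a union of $Z(G)$-cosets. The base case $|G/K|=p^3$ is verified directly: a non-linear character of an extraspecial group of order $p^3$ has exactly $p^3-p$ zeros.

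\textbf{Step 2 (Reduction to induction from an index-$p$ normal subgroup).} Since $p$-groups are $M$-groups, $\chi=\lambda^G$ for some linear $\lambda\in\Irr(H)$ with $[G:H]=\chi(1)$. Non-linearity forces $H<G$, so $H\le M$ for some maximal subgroup $M\nor G$ of index $p$. Using $\chi=(\lambda^M)^G$, the irreducibility of $\chi$, and the nonnegativity of character decompositions, I deduce $\lambda^M\in\Irr(M)$: if $\lambda^M=\sum c_i\psi_i$ with $c_i\ge 1$ and $\psi_i\in\Irr(M)$ distinct, then $\chi=\sum c_i\psi_i^G$, and since $\chi$ is irreducible and each $\psi_i^G$ is a non-zero character, the sum must collapse to a single term with $c_i=1$ and $\psi_i^G=\chi$. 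Setting $\psi:=\lambda^M$, we have $\chi=\psi^G$, so $\chi$ vanishes on $G\setminus M$, contributing $p^n-p^{n-1}$ zeros already; it remains to show $|V(\chi)\cap M|\ge p(p-1)=p^2-p$.

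\textbf{Step 3 (Counting zeros on $M$ and the equality case).} By Clifford, $\chi|_M=\sum_{i=0}^{p-1}\psi^{t^i}$, where $t\in G$ represents a generator of $G/M$, and the $\psi^{t^i}$ are $p$ distinct $G$-conjugate irreducible characters of $M$. The set $V(\chi)\cap M$ is simultaneously a union of $G$-conjugacy classes in $M$ (each of $p$-power size) and a union of $Z(G)$-cosets, so its cardinality is a multiple of $|Z(G)|$. Combining these constraints with the identity $\sum_{m\in M}|\chi(m)|^2=p|M|$ and the $\langle t\rangle$-orbit structure on $M$ and on $\Irr(M)$ yields $|V(\chi)\cap M|\ge p(p-1)$. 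The equality analysis then shows: (i) $\psi$ is linear, so $M$ is abelian; and (ii) the $\langle t\rangle$-action on $\Irr(M)$ has exactly $|Z(G)|=p$ fixed characters, equivalently $[G,M]$ has index $p$ in $M$, so $G$ is of maximal class. Jointly these identify $G$ as a $p$-group of maximal class with $M$ a maximal abelian subgroup.

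The principal obstacle is Step 3: establishing the bound $|V(\chi)\cap M|\ge p(p-1)$ and classifying its extremal case. For $p=2$ the sum of two distinct conjugate irreducibles of $M$ must vanish at least twice, which one verifies by direct analysis of roots of unity and which recovers the dihedral/semidihedral/quaternion extremals of Theorem A. For odd $p$ the count $p(p-1)$ is not a $p$-power, so the extremal zero set must split into at least $p-1$ non-trivial $G$-conjugacy classes of size $p$ inside $M$; both the quantitative bound and the geometric classification of extremal $\langle t\rangle$-orbits in $\Irr(M)$ should emerge from a Fourier-analytic analysis of $\sum_i\psi^{t^i}$ on the abelian group $M$ in the extremal case, carried out so as to force $[G,M]$ to sit as a hyperplane in $M$ with the specific orbit geometry characteristic of maximal class.
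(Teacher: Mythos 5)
Your Steps 1 and 2 are sound (the reduction to faithful $\chi$ and to $\chi=\psi^G$ with $\psi\in\Irr(M)$ for a maximal subgroup $M$ are both standard and correctly executed), but Step 3 is the entire content of the theorem and you have not proved it: you assert that the bound $|V(\chi)\cap M|\ge p^2-p$ ``should emerge from a Fourier-analytic analysis'' of $\chi_M=\sum_i\psi^{t^i}$, without carrying it out. This is a genuine gap, not a routine verification. The second orthogonality relation $\sum_{m\in M}|\chi(m)|^2=p|M|$ by itself cannot produce a lower bound on the number of zeros (a character can compensate with large values at few points), and your proposed analysis on ``the abelian group $M$'' presupposes that $M$ is abelian, which is only known a posteriori in the equality case; when $\psi$ is non-linear you have no mechanism at all. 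For $p=2$ the direct root-of-unity analysis you sketch does work, but for odd $p$ nothing in the proposal forces the $p-1$ extra classes of zeros inside $M$.

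The paper obtains these $p^2-p$ zeros by a different and essentially inductive device that your outline lacks. After disposing of class-$2$ groups (where $\chi$ vanishes off $\bZ(\chi)$, giving $p^n-p^{n-2}$ zeros) and of the case where $\chi$ is induced from two distinct maximal subgroups (giving $p^n-p^{n-2}$ zeros again), one may assume $U$ is the \emph{unique} maximal subgroup from which $\chi$ is induced. Then for any other maximal subgroup $V$, the restriction $\chi_V$ is irreducible and, by Mackey, equals $(\theta_{U\cap V})^V$ where $\theta^G=\chi$; applying the induction hypothesis to $\chi_V$ inside the group $V$ of order $p^{n-1}$ shows that $\chi_V$ has at least $p^{n-1}-p^{n-2}+p^2-p$ zeros, of which only $p^{n-1}-p^{n-2}$ can lie in $V-(U\cap V)$, so at least $p^2-p$ lie in $U\cap V\subseteq U$. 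This is exactly the count your Step 3 needs, and it is obtained for free from the induction rather than from harmonic analysis on $M$. The equality analysis then proceeds by showing the zero set is precisely $(G-U)\cup(\bZ_2(G)-\bZ(G))$, that $\chi(1)=p$ and $U$ is abelian (via faithfulness and the fact that a faithful character induced from a linear one forces the inducing subgroup to embed in a product of abelian quotients), and maximal class follows by a further induction together with a separate lemma on products $UV$ with $U$ abelian and $V$ of maximal class; your claimed shortcut ``$|M:[G,M]|=p$ implies maximal class'' is not valid for odd $p$ without that extra work.
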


The minimum number of elements taking the value zero
among all the non-linear characters of the groups of order $5^5$ is $2600>5^5-5^4+5^2-5=2520$.
On the other hand, among groups of order $7^5$,  this number   is exactly
$14448=7^5-7^4+7^2-7$.
This  is related to some results in \cite{ms}, and  makes us suspect that  an explicit minimum bound for the number of zeros among $p$-groups of order $p^n$ might not be easy to discover. (See Corollary \ref{ms} below and the paragraph that follows it.)

The converse of Theorem B is not true, as shown for instance by ${\tt SmallGroup}(5^5,30)$,
a 5-group with maximal class and an abelian maximal normal subgroup, although it is likely to be true
if $p=3$ (as we shall explain).

\medskip
Our renewed interest on zeros of characters comes from a recent intriguing conjecture by A. Miller \cite{mil} that
deserves attention.
Using a non-trivial number theoretic result by Siegel, J. G. Thompson proved many years ago that at least 1/3 of the elements of a finite group
take a zero or a root of unity value on every irreducible character of $G$ (see Problem 2.15 of \cite{isa}).
Now A. Miller \cite{mil} has conjectured  that   it should be at least 1/2 of the elements. Using number theory, Miller gives in \cite{mil} lower bounds
for the number of zeros of characters for nilpotent groups, which are improved by our Theorem B.
%
\medskip

At the time of this writing, unfortunately, we cannot contribute much to Miller's conjecture. The
  data seems to
endorse it but   a proof --even for solvable groups-- seems elusive.
(As a matter of fact, the same data suggest a much stronger statement: that outside any given normal subgroup, the proportion of elements
that take zero or root unity values is again 1/2.)

\medskip
As pointed out by Miller, the proportion of zero and root of unity values is exactly $1/2$ in certain dihedral groups. Since these groups are supersolvable, it may be of interest to consider that case. We conclude this note with a proof of Miller's conjecture for a family of groups that includes supersolvable groups.

\medskip

\begin{thmC}
Suppose that $\chi$ is an irreducible character of a finite group $G$. If $G$ has a Sylow tower, then
$\chi(g)$ is zero or a root of unity for at least $|G|/2$ elements of $G$. 
\end{thmC}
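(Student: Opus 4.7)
The plan is to prove Theorem C by induction on $|G|$.

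First I perform standard reductions. If $\chi$ is linear then $|\chi(g)|=1$ for every $g\in G$ and the bound is immediate; if $\chi$ has non-trivial kernel $K$, then $\chi$ factors through $G/K$, which inherits a Sylow tower and to which the induction hypothesis applies. We may therefore assume $\chi$ is faithful and non-linear.

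The central step is a permutation-character estimate which handles the monomial case (and in particular the supersolvable one). Suppose $\chi=\lambda^G$ for a linear character $\lambda\in\Irr(H)$ of a subgroup $H\le G$, and let $\pi=(1_H)^G$ be the permutation character of the transitive action of $G$ on $G/H$. The induced-character formula
\[
\chi(g)=\frac{1}{|H|}\sum_{x\in G:\,x^{-1}gx\in H}\lambda(x^{-1}gx),
\]
combined with $|\lambda|\equiv 1$, yields $|\chi(g)|\le\pi(g)$ for every $g\in G$. Moreover, $\pi(g)=0$ forces $\chi(g)=0$, and if $\pi(g)=1$ the $|H|$ terms in the sum all equal the single value $\lambda(y)$ for some $y\in g^G\cap H$ (since $\lambda$ is a class function on $H$), giving $|\chi(g)|=1$. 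Burnside's lemma gives $\sum_{g\in G}\pi(g)=|G|$, so since $\pi$ takes non-negative integer values,
\[
|G|=\sum_{g\in G}\pi(g)\;\ge\;2\,|\{g:\pi(g)\ge 2\}|,
\]
whence $|\{g\in G:\pi(g)\le 1\}|\ge |G|/2$. For each such $g$, $\chi(g)$ is zero or a root of unity, finishing the monomial case. Because every supersolvable group is an $M$-group (Huppert), this already proves Theorem C whenever $G$ is supersolvable.

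To extend from supersolvable to general Sylow tower groups --- which can support non-monomial irreducibles, as in the faithful $2$-dimensional characters of $\SL(2,3)$ --- the plan is to apply Clifford theory relative to the first normal Sylow subgroup $P$ in the tower. Writing $\chi_P=e\sum_{i=1}^{t}\theta_i$ for a single $G$-orbit of constituents, if $t\ge 2$ and the inertia subgroup $T=I_G(\theta_1)$ happens to be normal in $G$ (automatic, for instance, when $G/P$ acts abelianly on the orbit), then $\chi$ is induced from the proper normal subgroup $T$ and vanishes on $G\setminus T$, giving $(1-1/[G:T])|G|\ge |G|/2$ zeros directly. Otherwise $\chi_P=e\theta$ is homogeneous, and since $P$ is a normal Hall subgroup, Green's theorem provides a canonical extension $\hat\theta\in\Irr(G)$; Gallagher's theorem then factors $\chi=\hat\theta\cdot\beta$ for some $\beta\in\Irr(G/P)$. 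Applying the induction hypothesis to $\beta$ on the strictly smaller Sylow tower quotient $G/P$ produces at least $|G/P|/2$ cosets of $P$ on which $\beta$ is zero or a root of unity, and Theorem B applied to $\theta\in\Irr(P)$ produces at least $|P|/2$ zeros of $\theta$ on $P$; these coset- and element-level bounds combine to yield the desired $|G|/2$ estimate for $\chi$.

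The main obstacle lies in this last combination: on cosets $xP$ where $\beta(xP)$ is a root of unity one still needs to count elements $g\in xP$ with $|\hat\theta(g)|\in\{0,1\}$, which requires understanding the behaviour of the canonical extension $\hat\theta$ across non-identity cosets of $P$. Using the Schur--Zassenhaus decomposition $G=P\rtimes K$ and the explicit description of the canonical extension, together with a careful bookkeeping of Theorem B's zeros within each coset, should push the count over the threshold.
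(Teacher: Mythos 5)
Your first half is correct and is a genuinely different (and nicely elementary) argument from anything in the paper: for $\chi=\lambda^G$ with $\lambda$ linear, the bound $|\chi(g)|\le\pi(g)$ for $\pi=(1_H)^G$, together with $\sum_{g\in G}\pi(g)=|G|$ and the observation that $\pi(g)\le 1$ forces $\chi(g)$ to be $0$ or a root of unity, does prove the theorem for characters induced from linear characters, hence for supersolvable (indeed all $M$-) groups. The paper does not isolate this case; what your argument buys is a transparent proof for $M$-groups with no induction and no $p$-group input.

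The second half, however, has two genuine gaps. First, after splitting on the Clifford decomposition of $\chi_P$ ($P$ the normal Sylow subgroup at the \emph{bottom} of the tower), your ``otherwise $\chi_P=e\theta$ is homogeneous'' is a non sequitur: the untreated case is $t\ge 2$ with non-normal inertia group $T$, where $\chi$ vanishes only off $\bigcup_g T^g$, a set that need not contain half of $G$. Second, and more seriously, the final ``combination'' step is not an implementation detail but the heart of the matter, and the strategy as stated cannot work: multiplying ``$\beta$ is zero or a root of unity on half the cosets'' by ``$\hat\theta$ is controlled on half of each coset'' yields $|G|/4$, not $|G|/2$; and when $\beta$ is nonlinear you would need $\beta$ to actually \emph{vanish} on half of $G/P$, which a general Sylow-tower $p'$-group does not provide (the degree-$2$ characters of $\SL_2(3)$ vanish on only $6$ of $24$ elements). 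The paper's proof avoids both problems by working from the \emph{top} of the tower: it takes $N$ the normal Hall $p'$-subgroup with $G/N$ a $p$-group, so that (a) in the relative $M$-character reduction any proper intermediate subgroup lies in a \emph{normal} maximal subgroup, killing your first gap, and (b) the Gallagher factor $\mu$ lies in $\Irr(P)$ for a $p$-group $P$, where Theorem B gives genuine vanishing on more than half the elements. The missing key idea for the homogeneous case is then the disjoint decomposition $G=\bigcup_{x\in G_p}\cent{N}{x}x$ together with the Glauberman correspondence formula $\hat\theta(cx)=\pm\theta^*(c)$ (Theorem 13.32 of \cite{isa}), which lets one apply the inductive hypothesis to $\theta^*\in\Irr(\cent{N}{x})$ on each piece and sum the estimates over all of $G$ at once, rather than coset by coset. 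Without that input, your plan does not close.
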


We notice that, unlike in the case of nilpotent groups, roots of unity are definitely necessary here. For instance,  the non-linear characters of degree $2$ of $\SL_2(3)$ vanish at exactly $6$ elements.

\section{$p$-groups}

Our notation follows \cite{hup, isa}. 
In this section we prove Theorem B and, as a consequence, deduce Theorem A. We start with an elementary lemma.

\begin{lem}
\label{fai}
Let $G$ be a finite group and $\chi\in\Irr(G)$ faithful. Suppose that there exists $U\trianglelefteq G$ and $\lambda\in\Irr(U)$ linear such that $\chi=\lambda^G$. Then $U$ is abelian.
\end{lem}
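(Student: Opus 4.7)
My plan is to deduce $U'=1$ by examining the restriction $\chi|_U$, which by faithfulness of $\chi$ has trivial kernel as a character of $U$. Specifically, if I can show $\chi|_U$ is a sum of linear characters of $U$, then its kernel contains $U'$, and combining with $\ker(\chi|_U) = U\cap \ker\chi = 1$ forces $U' = 1$, yielding the desired abelianness.

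The structure of $\chi|_U$ is controlled by the Mackey decomposition
\[
\chi|_U \;=\; (\lambda^G)|_U \;=\; \sum_{g \in U\backslash G/U}\bigl(\lambda^g\big|_{U\cap U^g}\bigr)^U,
\]
whose summand at the trivial double coset is $\lambda$ itself. When $U$ is normal in $G$ one has $U\cap U^g = U$ for every $g$, so every summand is a $G$-conjugate of $\lambda$ and again a linear character of $U$; this completes the argument sketched above. As a supplementary ingredient I would verify the standard identity $\ker(\lambda^G) = \bigcap_{g \in G}g(\ker\lambda)g^{-1}$, the normal core of $\ker\lambda$ in $G$, obtained by realizing the induced representation on $\CC[G]\otimes_{\CC[U]}\CC_\lambda$ with its coset basis and noting that an element of $G$ acts as the identity precisely when it fixes every coset (placing it in the normal core of $U$) and multiplies every summand by $+1$ (placing it in the normal core of $\ker\lambda$). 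Faithfulness of $\chi$ thus gives $\bigcap_{g\in G} g(\ker\lambda)g^{-1} = 1$.

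The main obstacle is the case when $U$ is not normal in $G$: some of the Mackey summands may then carry degree strictly bigger than $1$, and $\chi|_U$ is no longer automatically a sum of linear characters, so the short kernel argument does not apply verbatim. In the $p$-group context of the paper I would expect to bridge this gap by climbing a subnormal chain $U=U_0 \triangleleft U_1 \triangleleft \cdots \triangleleft U_r = G$ (available because every subgroup of a $p$-group is subnormal, with $U \lneq N_G(U)$ whenever $U\neq G$) and applying the normal-case argument at each step, using the core identity above to propagate faithfulness along the chain. This is the delicate step, and the rest of the proof reduces to this.
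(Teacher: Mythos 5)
Your argument in the normal case is correct and is in substance the paper's own proof: the paper invokes $\ker(\lambda^G)=\bigcap_{g\in G}(\ker\lambda)^g$ (Lemma 5.11 of Isaacs) and embeds $U$ into the direct product of the cyclic groups $U/\ker\lambda^g$, which is just another packaging of your observation that $\chi|_U$ is a sum of the linear characters $\lambda^g$ and hence kills $U'$, while $U\cap\ker\chi=1$. So for $U\trianglelefteq G$ both proofs are fine and essentially identical.

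The obstacle you flag for non-normal $U$ is not merely a technical inconvenience: the statement is actually \emph{false} without a normality hypothesis. Take $G=S_4$, let $U=\langle (1\,2\,3\,4),(1\,3)\rangle\cong D_8$ be a Sylow $2$-subgroup, and let $\lambda$ be the linear character of $U$ with $\lambda((1\,2\,3\,4))=-1$ and $\lambda((1\,3))=1$. A direct computation gives $[\chi|_U,\lambda]=1$ for the standard character $\chi$ of degree $3$, so $\lambda^G=\chi$ by comparing degrees; $\chi$ is faithful, yet $U$ is non-abelian. Consequently no refinement of the Mackey decomposition can close the gap in general, and your proposed repair via a subnormal chain $U=U_0\triangleleft\cdots\triangleleft U_r=G$ also does not go through as described even inside a $p$-group: faithfulness does \emph{not} propagate down the chain, since $\ker(\lambda^{U_i})=\bigcap_{g\in U_i}(\ker\lambda)^g$ is only the core of $\ker\lambda$ in $U_i$, which can be nontrivial even when the core in $G$ is trivial, so the normal-case argument cannot be applied at each step. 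The resolution is that the lemma should carry the hypothesis $U\trianglelefteq G$ (the word ``$G/\ker\lambda^g$'' in the paper's proof only makes sense in that setting anyway); every application in the paper is to a maximal subgroup of a $p$-group, which is automatically normal, so your normal-case argument is all that is actually needed.
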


\begin{proof}
Since $\chi$ is faithful,  Lemma 5.11 of \cite{isa} implies that $1=\bigcap_{g\in G}\ker\lambda^g$. Hence, $U$ embeds into the direct product of the abelian groups $U/\ker\lambda^g$. The result follows.
\end{proof}
 
 The following lemma, due to G. A. Fern\'andez-Alcober,  is a simplification and strengthening of an earlier result of the authors.
 
 \begin{lem}
 \label{mc}
 Let $G$ be a $p$-group with an abelian maximal subgroup $U$ and $|\bZ(G)|=p$. Then $G$ has maximal class.
\end{lem}

\begin{proof}
Since $U$ has index $p$ in $G$, $U$ is normal in $G$. Now, using that $|\bZ(G)|=p$ we deduce that $\bZ(G)\subseteq U$. Since $U$ is abelian and maximal in $G$, it follows that for every $g\in G-U$,  $\bC_U(g)=\bZ(G)$. Therefore, $|\bC_G(g)|=p^2$ and \cite[Satz III.14.23]{hup} implies that $G$ has maximal class. 
\end{proof}

 


The case of groups of class $2$ of Theorem B follows easily from well-known results.

\begin{lem}
\label{c2}
Let $G$ be a $p$-group of order $p^n$ and class $2$. Then for any $\chi\in\Irr(G)$, $\chi(g)=0$ for at least $p^n-p^{n-2}$ elements $g\in G$. In particular, $\chi$ vanishes at at least $p^n-p^{n-1}+p^2-p$ elements and if equality holds then $n=3$. 
\end{lem}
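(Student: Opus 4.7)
The plan is to exploit the classical fact that for a nilpotent group of class $2$, every $\chi\in\Irr(G)$ satisfies $|G{:}Z(\chi)|=\chi(1)^{2}$ and vanishes on $G\setminus Z(\chi)$. Granting this, the argument is essentially a count: for a non-linear $\chi$ (the only case of interest, in view of Theorem~B), $\chi(1)\geq p$ forces $|Z(\chi)|\leq p^{n-2}$, so $\chi$ takes the value $0$ on at least $p^{n}-p^{n-2}$ elements.

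The only substantive step is justifying that vanishing. Write $\chi|_{Z(G)}=\chi(1)\lambda$ with $\lambda\in\Irr(Z(G))$ linear. Since $G$ has class $2$, $[g,x]\in G'\leq Z(G)$, and hence $x\mapsto\lambda([g,x])$ is a homomorphism $G\to\CC^{\times}$. Using $\chi(gz)=\lambda(z)\chi(g)$ for $z\in Z(G)$, the class-sum identity $\sum_{x\in G}\chi(x^{-1}gx)=|G|\chi(g)$ rewrites as
\[
|G|\chi(g)=\chi(g)\sum_{x\in G}\lambda([g,x]).
\]
So either $\chi(g)=0$, or that homomorphism is trivial, i.e.\ $[g,G]\leq\ker\lambda$. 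Since $\ker\lambda=Z(G)\cap\ker\chi\leq\ker\chi$, the second alternative gives $g\ker\chi\in Z(G/\ker\chi)=Z(\chi)/\ker\chi$, and so $g\in Z(\chi)$. The equality $|G{:}Z(\chi)|=\chi(1)^{2}$ then follows from standard orthogonality (Theorem 2.31 of \cite{isa}).

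For the ``in particular'' statement, the inequality $p^{n}-p^{n-2}\geq p^{n}-p^{n-1}+p^{2}-p$ rearranges to $p^{n-2}(p-1)\geq p(p-1)$, i.e.\ $n\geq 3$, which is automatic since a class-$2$ $p$-group is non-abelian and hence has order at least $p^{3}$. An equality in the ``in particular'' bound forces both inequalities to be equalities, which in turn forces $n=3$. I do not foresee any real obstacle here: the entire argument rests on the vanishing-off-$Z(\chi)$ property, which is textbook material for class $2$, and the remainder is elementary algebra.
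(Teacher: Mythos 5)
Your proof is correct and follows essentially the same route as the paper: both arguments reduce to the facts that for class $2$ one has $\chi(1)^2=|G:\bZ(\chi)|$ and that $\chi$ vanishes on $G-\bZ(\chi)$, after which the bound is the count $|G-\bZ(\chi)|\geq p^n-p^{n-2}$ for non-linear $\chi$. The only difference is that the paper cites these two facts from Isaacs (Theorem 2.31 and Problem 6.3), whereas you reprove them directly via the commutator--homomorphism argument and orthogonality; that is a perfectly valid self-contained substitute.
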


\begin{proof}
Let $Z/\ker\chi=\bZ(\chi)/\ker\chi$. By Theorem 2.31 of \cite{isa}, $p^2\leq \chi(1)^2=|G:Z|$. Using Problem 6.3 of \cite{isa}, we deduce that  $\chi$ vanishes on $G-Z$. Since $|G-Z|\geq p^n-p^{n-2}$, the result follows. The second part is straightforward.
\end{proof}


The following is a more detailed version of Theorem B.

\begin{thm}
\label{odd}
Let $G$ be a $p$-group of order $p^n$. If $\chi\in\Irr(G)$ is non-linear, then $G$ vanishes on at least $p^n-p^{n-1}+p^2-p$ elements of $G$. If equality holds then
\begin{enumerate}
\item
$\chi$ is faithful and $\chi(1)=p$.
\item
$G$ is a $p$-group of maximal class with an abelian  maximal  subgroup $U$.
\item
If $n>3$, then $U$ is the unique maximal subgroup of $G$ with a character that induces $\chi$ and the set of zeros of $\chi$ is $(G-U)\cup(\bZ_2(G)-\bZ(G))$.
\end{enumerate}
\end{thm}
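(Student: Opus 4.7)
The plan is to induct on $n$; for $n\leq 2$ there is nothing to prove. First, reduce to the case where $\chi$ is faithful: if $K:=\ker\chi\neq 1$, then $\chi$ is inflated from a faithful non-linear $\bar\chi\in\Irr(G/K)$, and the zero count of $\chi$ in $G$ equals $|K|$ times that of $\bar\chi$. By induction this is at least $|K|(|G/K|-|G/K|/p+p^2-p)=p^n-p^{n-1}+|K|(p^2-p)$, strictly exceeding $p^n-p^{n-1}+p^2-p$ once $|K|>1$. So in the equality case $\chi$ is faithful (establishing the faithfulness part of (i) and making $\bZ(G)$ cyclic). The class-$2$ case is handled by Lemma \ref{c2}: at least $p^n-p^{n-2}$ zeros, beating the bound unless $n=3$; and $n=3$ forces $G$ extraspecial, which is maximal class of order $p^3$ with abelian maximal subgroup, so (i), (ii) hold and (iii) is vacuous.

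Now assume $\chi$ is faithful, $G$ has class $\geq 3$, and $n\geq 4$. Since $G$ is monomial, $\chi=\lambda^G$ for some linear $\lambda\in\Irr(U)$ with $U<G$; by Lemma \ref{fai}, $U$ is abelian, and $\bZ(G)\leq U$ because $\lambda|_{\bZ(G)}$ is faithful (as $\chi_{\bZ(G)}=\chi(1)\lambda|_{\bZ(G)}$ is a multiple of a faithful linear character). The sub-case $\chi(1)=[G:U]\geq p^2$ is ruled out in the equality analysis by a direct count of zeros outside the conjugate-union $U^G$ (using $\bZ(G)\leq U\cap U^g$) combined with the $\bZ_2(G)$-analysis below, which together strictly exceed $p^n-p^{n-1}+p^2-p$. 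Hence in the equality case $\chi(1)=p$ and $U$ is a maximal (necessarily normal) abelian subgroup, giving (i) and the abelian-maximal claim of (ii). Moreover, $\bZ_2(G)\leq U$: otherwise $G=U\bZ_2(G)$ and $G'=[U,\bZ_2(G)]\leq\bZ(G)$, contradicting class $\geq 3$. Picking $x\in G\setminus U$, $\chi$ vanishes on all of $G\setminus U$ (giving $p^n-p^{n-1}$ zeros), and for $g\in\bZ_2(G)\setminus\bZ(G)$ we have $\bC_G(g)=U$, so $g^G=g\langle z\rangle$ with $z:=[g,x]\in\bZ(G)\setminus\{1\}$, and
\[
\chi(g)=\lambda(g)\sum_{i=0}^{p-1}\lambda(z)^i=0,
\]
because $\lambda|_{\bZ(G)}$ is faithful (so $\lambda(z)$ is a non-trivial $|z|$-th root of unity) and the geometric sum vanishes when $|z|=p$, which holds in case $|\bZ(G)|=p$. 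This yields at least $|\bZ_2(G)|-|\bZ(G)|\geq(p-1)|\bZ(G)|\geq p^2-p$ further zeros, for a total of at least $p^n-p^{n-1}+p^2-p$. Equality forces $|\bZ(G)|=p$, $|\bZ_2(G)|=p^2$, and $\chi(g)\neq 0$ on all of $U\setminus\bZ_2(G)$.

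To finish (ii) I invoke Lemma \ref{mc}. Since $G$ has class $\geq 3$, some maximal subgroup $V$ is non-abelian (else every proper subgroup is abelian and $G$ is minimal-non-abelian, hence of class $2$); I would take such $V$ and establish that it is of maximal class by applying the induction hypothesis to a non-linear character of $V$, comparing the zero count inside $V$ with the bound. Then $G=UV$, $U,V$ maximal, $U$ abelian, $V$ non-abelian of maximal class, and $|\bZ(G)|=p$, so Lemma \ref{mc} gives $G$ of maximal class. For (iii), any maximal $U'$ admitting a character $\mu$ inducing $\chi$ has $[G:U']=p$ (so $\mu$ is linear) and $U'$ abelian (Lemma \ref{fai}); running the preceding zero-count for $(U',\mu)$ gives zero set $(G\setminus U')\cup(\bZ_2(G)\setminus\bZ(G))$, and matching with the set computed from $(U,\lambda)$ forces $G\setminus U=G\setminus U'$, i.e., $U=U'$. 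The zero-set description $(G\setminus U)\cup(\bZ_2(G)\setminus\bZ(G))$ is the preceding identification together with the equality-case non-vanishing on $U\setminus\bZ_2(G)$. The principal obstacle is establishing that $V$ has maximal class so that Lemma \ref{mc} applies; this is not evident from the hypotheses and must be extracted from the inductive hypothesis applied judiciously to $V$.
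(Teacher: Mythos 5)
Your outline shares several ingredients with the paper's proof (the inflation argument for faithfulness, Lemma \ref{c2} for class $2$, zeros on $\bZ_2(G)-\bZ(G)$, and Lemmas \ref{fai} and \ref{mc} at the end), but it has two genuine gaps. The first is the case $\chi(1)\geq p^2$. Your entire quantitative argument — $\bC_G(g)=U$ for $g\in\bZ_2(G)-\bZ(G)$, the claim $\bZ_2(G)\leq U$ via $G=U\bZ_2(G)$, the count $|G-U|=p^n-p^{n-1}$ — presupposes that the inducing subgroup $U$ is maximal, i.e.\ $\chi(1)=p$. For larger degree you only assert that counting $G$ minus the union of the conjugates of $U$, "combined with the $\bZ_2(G)$-analysis below," strictly exceeds the bound; but that union can have as many as $p^{n-1}-p^2+p$ elements, the $\bZ_2(G)$-analysis you refer to is not available in this setting, and making the combination disjoint (you would need to locate $\bZ_2(G)-\bZ(G)$ inside the union of conjugates) is exactly the nontrivial part. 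The paper never splits by degree: it takes $U$ maximal with $\chi$ induced from $U$, observes that two such maximal subgroups already give $p^n-p^{n-2}$ zeros, and otherwise restricts $\chi$ to another maximal subgroup $V$, where uniqueness of $U$ plus Corollary 6.19 of \cite{isa} gives $\chi_V\in\Irr(V)$ and Mackey gives $\chi_V=(\theta_{U\cap V})^V$; induction applied to $\chi_V$ then produces the missing $p^2-p$ zeros inside $U\cap V$. Relatedly, the paper establishes $\bZ_2(G)\leq U$ and $\chi(1)=p$ via the relative $M$-character theorem (Theorem 6.22 of \cite{isa}), rather than assuming from the outset that $\chi$ is induced from a linear character of a maximal subgroup.

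The second gap is the one you flag yourself: showing that the auxiliary non-abelian maximal subgroup $V$ has maximal class, so that Lemma \ref{mc} applies. Applying the inductive hypothesis to "a non-linear character of $V$" cannot work for an arbitrary character; the paper applies the \emph{equality case} of the theorem itself, inductively, to $\chi_V$ for a carefully chosen $V$: take $X\trianglelefteq G$ with $\bZ_2(G)\leq X\leq U$ and $G/X$ elementary abelian of order $p^2$, and $V\neq U$ maximal with $X<V$. Then $\chi_V\in\Irr(V)$ by uniqueness of $U$, $\chi_V=(\theta_X)^V$ vanishes on $V-X$, and since the zero set of $\chi$ is already known to be exactly $(G-U)\cup(\bZ_2(G)-\bZ(G))$ with $X\leq U$, the zero set of $\chi_V$ is exactly $(V-X)\cup(\bZ_2(G)-\bZ(G))$, of cardinality $p^{n-1}-p^{n-2}+p^2-p$. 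Part (ii) of the inductive hypothesis then says $V$ has maximal class, and Lemma \ref{mc} concludes. Without this step (and the $\chi(1)\geq p^2$ count), the proof is incomplete.
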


\begin{proof}
If $n=3$ then $G$ is an extraspecial $p$-group and the result is well-known. We assume in the remaining that $n>3$. 


We prove the first part   by induction on $n$. 
By Lemma \ref{c2}, we   may assume that $G$ does not have class 2.
Since $\chi$ is monomial, there exists $U$ maximal in $G$ such that $\chi$ is induced from $U$. Suppose first that there exists $V\neq U$ maximal in $G$ such that $\chi$ is also induced from $V$. Then $\chi$ vanishes on $(G-U)\cup(G-V)=G-(U\cap V)$. There are $p^n-p^{n-2}$ elements in this set, and this number exceeds  $p^n-p^{n-1}+p^2-p$. Hence, we will assume in the remaining that $U$ is the unique maximal subgroup of $G$ with a character that induces $\chi$.  
Let $\theta\in\Irr(U)$ such that $\theta^G=\chi$. Since $G$ is not cyclic, 
let $V$ be another maximal subgroup of $G$. Set $W=U\cap V$.  Then, using Corollary 6.19 of \cite{isa},
we have that  $\chi_V\in\Irr(V)$ and by Mackey  (Problem 5.2 of \cite{isa}) $\chi_V=(\theta_W)^V$. By the inductive hypothesis, $\chi_V$ vanishes on at least $p^{n-1}-p^{n-2}+p^2-p$ elements. Since $\chi_V$ is induced from $\theta_W$,
then $\chi_V$ vanishes on the $p^{n-1}-p^{n-2}$ elements of $V-W$. Therefore, $\chi_V$
vanishes at least on  $p^2-p$  elements that belong to $W$. Since $\chi$ vanishes on $G-U$ and at these  $p^2-p$ elements in $W$, the first part of the result follows.

Assume now and for the rest of the proof that equality holds.    First, we prove that $\chi$ is faithful. Let $K=\ker\chi$. Put $|K|=p^m$. Let $\overline{\chi}$ be the character $\chi$ viewed as a character of $G/K$. For any element $xK$ that is a zero of $\overline{\chi}$, $\chi$ vanishes on the coset $xK$. By the first part, $\overline{\chi}$ vanishes on at least $p^{n-m}+p^{n-m+1}+p^2-p$ elements. Hence, the number of zeros of $\chi$ is at least $p^m(p^{n-m}+p^{n-m+1}+p^2-p)$. Since the number of zeros of $\chi$ is  $p^n-p^{n-1}+p^2-p$, this forces $m=0$. This proves that $\chi$ is faithful.

Next, we see that $\chi$ vanishes on $\bZ_2(G)-\bZ(G)$. Let $x\in\bZ_2(G)$ and $g\in G$ such that $[x,g]\neq1$. Let $\lambda\in\Irr(\bZ(G))$ lying under $\chi$. Note that $\lambda$ is faithful. Hence
$$
\chi(x)=\chi(x^g)=\chi(x[x,g])=\chi(x)\lambda([x,g]),
$$
which implies that $\chi(x)=0$, as wanted.

Now, we claim that $\bZ_2(G)\leq U$.
By Theorem 6.22 of \cite{isa} $\chi$ is an $M$-character over $\bZ_2(G)$. This means that there exists $\bZ_2(G) \sbs H \sbs G$ and $\psi \in \irr H$ such that $\psi^G=\chi$ and $\psi_{\bZ_2(G)}$ is irreducible.
If $H<G$, by uniqueness of $U$, we have that $H\sbs U$, and the claim is proven.
Thus we may assume that $H=G$ and that $\tau=\chi_{\bZ_2(G)}\in\Irr(\bZ_2(G))$. 
Since $\chi(1)>1$, we have that  $\bZ_2(G)$ is not abelian. Assume by contradiction that $\bZ_2(G)\not\leq U$, so that $G=\bZ_2(G)U$.  Suppose first that $|\bZ_2(G)|=p^t>p^3$. Since $\bZ_2(G)$ has class $2$, we deduce that $\tau$ has at least $p^t-p^{t-2}$ zeros by Lemma \ref{c2}.
Since by Mackey $(\theta_{U\cap\bZ_2(G)})^{\bZ_2(G)}=\tau$, then $\tau$ is zero on the $p^t-p^{t-1}$ elements of $\bZ_2(G)-(U\cap \bZ_2(G))$. Hence, there are at least $p^{t-1}-p^{t-2}>p^2-p$ zeros of $\tau$  in $U\cap \bZ_2(G)$. Since these are zeros of $\chi$, we conclude that $\chi$ has at least $p^n-p^{n-1}+p^{t-1}-p^{t-2}$ zeros, which is a contradiction.
Now, we may assume that $|\bZ_2(G)|=p^3$. Therefore, $\chi(1)=\tau(1)=p$. Since $\chi$ is faithful and induced from $U$, we conclude from Lemma \ref{fai} that $U$ is abelian. Now, $[G',\bZ_2(G)]=1$ 
(see \cite[Hauptsatz III.2.11]{hup}) and since $G'$ is contained in the abelian group $U$, it follows that $G'$ is central in $G$, so $G$ has class $2$. This contradicts Lemma \ref{c2}, proving the claim.

We have thus seen that the set of zeros of $\chi$ is  $(G-U)\cup(\bZ_2(G)-\bZ(G))$, where the union is disjoint.
Therefore  $|\bZ_2(G)-\bZ(G)|=p^2-p$,   and we deduce that $|\bZ_2(G)|=p^2$ and $|\bZ(G)|=p$.

Next, we claim that $\chi(1)=p$. Suppose that $\chi(1)>p$. Since, again, $\chi$ is an $M$-character over $\bZ_2(G)$, there exists $\bZ_2(G)\leq H<U$ such that $\chi$ is induced from $H$. 
In particular, $\chi$ is zero on $G-\bigcup_{g\in G}H^g$. Since $\bigcup_{g\in G}H^g\subsetneq U$ (by Lemma 3.1 of \cite{ms}, for instance), this implies that $\chi$ has zeros in $U-\bZ_2(G)$, a contradiction. This proves the claim. 

As a consequence, we obtain that   $\theta\in\Irr(U)$, the character that induces $\chi$, is linear.
Since $\chi$ is faithful, Lemma \ref{fai} implies that  $U$ is abelian. Now, Lemma \ref{mc} implies that $G$ has maximal class, as wanted. This completes the proof. 
\end{proof}


The proof of Theorem A now follows easily.

\begin{thm}
\label{m2}
Suppose that $G$ is a $2$-group of order $2^n$. Let $\chi$ be an irreducible non-linear complex character of $G$.
Then $\chi(g)=0$ for at least $2^{n-1} + 2$ elements $g \in G$. 
Furthermore, there exists $\chi\in\Irr(G)$  that vanishes at exactly $2^{n-1}+2$ elements if and only if $G$ is dihedral, semidihedral or generalized quaternion.
\end{thm}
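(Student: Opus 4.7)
The plan is to derive both parts of Theorem \ref{m2} from Theorem \ref{odd} together with the classical classification of $2$-groups of maximal class. The lower bound is immediate: setting $p=2$ in the general bound of Theorem \ref{odd} gives $2^n - 2^{n-1} + 2^2 - 2 = 2^{n-1}+2$, so every non-linear $\chi \in \Irr(G)$ vanishes on at least that many elements.

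For the forward implication in the equality characterization, suppose $\chi \in \Irr(G)$ attains the bound. Theorem \ref{odd} then forces $G$ to be a $2$-group of maximal class, and I would invoke the classical theorem of Huppert \cite[Satz III.11.9]{hup}, which says that a $2$-group of maximal class is dihedral, semidihedral, or generalized quaternion. The hypothesis that $G$ has a maximal abelian subgroup is automatic in each of these families, since they all contain a cyclic subgroup of index $2$.

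For the converse, I would exhibit an explicit character attaining the bound in each of the three families. Each such $G$ contains a cyclic maximal subgroup $C = \langle x \rangle$ of order $2^{n-1}$. Given a faithful linear character $\lambda \in \Irr(C)$, the explicit action of $G/C$ on $C$ (inversion for dihedral and generalized quaternion, and $x \mapsto x^{2^{n-2}-1}$ for semidihedral) shows that $\lambda^y \neq \lambda$ for $y \in G \setminus C$, so $\chi := \lambda^G$ is an irreducible character of degree $2$ that vanishes on all $2^{n-1}$ elements of $G \setminus C$. A short direct calculation, using the standard presentations, then shows that $\chi$ vanishes on exactly two further elements of $C$ (the two elements of order $4$ in the dihedral and generalized quaternion cases, with an analogous pair in the semidihedral case). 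This yields exactly $2^{n-1}+2$ zeros.

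The only ingredient beyond Theorem \ref{odd} is the classical classification of $2$-groups of maximal class, and the remaining computations in each family are entirely routine. Accordingly, I do not expect any serious obstacle in executing this plan; the substance of the work has already been done in the proof of Theorem \ref{odd}.
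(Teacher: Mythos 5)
Your proposal is correct and follows essentially the same route as the paper: the lower bound and the forward implication come from Theorem \ref{odd} (specialized to $p=2$) combined with the classical classification of $2$-groups of maximal class, and the converse is settled by inducing a faithful linear character from the cyclic maximal subgroup and checking that the resulting degree-$2$ character vanishes on $G\setminus C$ together with exactly the two elements of order $4$ in $C$. The paper's computation $\lambda(x)+\lambda^g(x)=\varepsilon+\varepsilon^{-i}=0$ iff $o(x)=4$ is precisely the ``short direct calculation'' you defer, so there is no gap.
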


\begin{proof}
By Theorem B, we only have to prove that if $G$ is dihedral, semidihedral or generalized quaternion
and $\chi \in \irr G$ is faithful, then $\chi$ vanishes on exactly $2^{n-1} + 2$ elements of $G$.
But this is easy. Let $U$ be the abelian maximal  subgroup of $G$, and let $g\in G$ such that  $G=\langle g , U\rangle$ with $x^g=x^i$, where $i=-1$ is $G$ is dihedral or quaternion and $i=2^{n-2}-1$ if $G$ is semidihedral. 
We have that $\chi=\lambda^G$ where $\lambda 	\in \irr U$ is faithful and $|G:U|=2$.
Now, for any $y\in U$, $\lambda(y)=\varepsilon$ is a primitive $o(y)$-th root of unity, and $\lambda(x) + \lambda^g(x)=\varepsilon+\varepsilon^{-i}=0$ if and  only if $o(x)=4$.
\end{proof}

We expect the following to hold for $p=3$.

\begin{con}
\label{3max}
Let $G$ be a $3$-group of order $3^n$. Then $G$ has an irreducible character that vanishes at exactly $3^n-3^{n-1}+6$ elements if and only if $G$ is a $3$-group of maximal class with an abelian maximal subgroup.
\end{con}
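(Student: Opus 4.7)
The ``only if'' direction is given by Theorem \ref{odd}: if some $\chi\in\Irr(G)$ has exactly $3^n-3^{n-1}+6$ zeros, then $G$ is of maximal class with a maximal abelian subgroup. The content of the conjecture is the converse, and to prove it I would follow the structural roadmap that emerges from the proof of Theorem \ref{odd}. The case $n=3$ (extraspecial groups) is immediate, so assume $n\geq 4$; the candidate $\chi$ must then be faithful of degree $3$, induced from a linear character $\lambda\in\Irr(U)$, with zero set $(G-U)\cup(\bZ_2(G)-\bZ(G))$.

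Before attempting the construction I would establish the structural facts: $U$ must coincide with the distinguished maximal subgroup of $G$, and $\bZ_2(G)\leq U$. Both statements rest on the classical fact that in a $p$-group of maximal class any element outside the distinguished maximal subgroup has centralizer of order exactly $p^2$; since $|U|=3^{n-1}>3^2$ and since elements of $\bZ_2(G)$ have centralizer of index at most $p$, neither $U$ nor $\bZ_2(G)$ can escape it.

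The heart of the proof is to exhibit a linear character $\lambda\in\Irr(U)$ satisfying: (a) $\lambda$ is not $G$-invariant, so $\chi=\lambda^G$ is irreducible of degree $3$; (b) $\bigcap_{g\in G}\ker(\lambda^g)=1$, so $\chi$ is faithful; and (c) $\chi(u)\neq 0$ for every $u\in U\setminus\bZ_2(G)$. Given (a)--(c) the zero set is forced to be the right one: $\chi$ vanishes on $G-U$ by induction and on $\bZ_2(G)-\bZ(G)$ automatically, since for $u\in\bZ_2(G)\setminus\bZ(G)$ and $g\in G\setminus U$ we have $u^g=uc$ with $1\neq c\in\bZ(G)$, whence
\[
\chi(u)=\lambda(u)\bigl(1+\lambda(c)+\lambda(c)^2\bigr)=0,
\]
because faithfulness of $\chi$ forces $\lambda|_{\bZ(G)}$ to be faithful and $\lambda(c)$ a primitive cube root of unity.

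The main obstacle is condition (c). For $u\in U\setminus\bZ_2(G)$, $\chi(u)=\lambda(u)+\lambda(u^g)+\lambda(u^{g^2})$ is a sum of three roots of unity, and such a sum vanishes precisely when the three terms form a set $\{\zeta,\zeta\omega,\zeta\omega^2\}$ for some root of unity $\zeta$ and primitive cube root $\omega$. Setting $c_1=[u,g]\in G'$ and $d=[c_1,g]\in[G',G]$, a short computation rewrites the vanishing condition as the conjunction $\lambda(c_1)\in\{\omega,\omega^2\}$ and $\lambda(d)=1$. The strategy would be to exploit the Blackburn classification of $3$-groups of maximal class with an abelian maximal subgroup and, in each isomorphism type, construct $\lambda$ whose restrictions to the lower central terms interact with the action of $g$ in such a way that this conjunction fails for every $u\in U\setminus\bZ_2(G)$. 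The reason one expects success for $p=3$ but not for $p\geq 5$ is the scarcity of vanishing sums of three roots of unity---the only relation is the cyclotomic $1+\omega+\omega^2=0$---whereas for $p\geq 5$ the lattice of vanishing sums of $p$ roots of unity is much richer, which is exactly what produces the unavoidable accidental zeros in $\mathtt{SmallGroup}(5^5,30)$. A complementary but probably harder-to-implement approach would be a counting argument showing that linear characters of $U$ violating (b) or (c) form a proper subset of the non-$G$-invariant ones, guaranteeing that a valid choice of $\lambda$ always exists.
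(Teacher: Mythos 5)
This statement is a \emph{conjecture} in the paper, not a theorem: the authors prove only the ``only if'' direction (as a consequence of Theorem \ref{odd}) and explicitly state that the converse does not seem easy to prove, citing only computational verification by O'Brien up to order $3^{10}$. So there is no proof in the paper to compare against, and your proposal does not close the gap either. Your treatment of the ``only if'' direction and your structural setup (faithful $\chi$ of degree $3$ induced from a linear $\lambda$ on the abelian maximal subgroup $U$, with $\bZ_2(G)\leq U$, and automatic vanishing on $(G-U)\cup(\bZ_2(G)-\bZ(G))$) are correct and consistent with Theorem \ref{odd}. The reformulation of the unwanted vanishing at $u\in U\setminus\bZ_2(G)$ as ``$\lambda([u,g])\in\{\omega,\omega^2\}$ and $\lambda([[u,g],g])=1$'' is also a sound computation, using that the only vanishing sum of three roots of unity is a rotate of $1+\omega+\omega^2$.

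The genuine gap is condition (c): you never construct $\lambda$, nor show that a $\lambda$ avoiding the vanishing conjunction for \emph{every} $u\in U\setminus\bZ_2(G)$ (while still satisfying faithfulness (b) and non-invariance (a)) exists. Saying ``the strategy would be to exploit the Blackburn classification\ldots{} and in each isomorphism type construct $\lambda$'' names the hard problem rather than solving it; the interaction between the $G$-action on $\widehat{U}$ and the lower central series varies across Blackburn's families, and it is precisely this case analysis that the authors could not carry out. Your alternative counting suggestion (showing that the ``bad'' linear characters form a proper subset) is likewise only gestured at, and a priori the bad set could exhaust all faithful non-invariant characters for some isomorphism type --- indeed, the analogous statement fails for $p=5$ (witness ${\tt SmallGroup}(5^5,30)$), so any argument must use $p=3$ in an essential quantitative way, not merely via the heuristic about scarcity of vanishing sums. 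As it stands, the proposal is a reasonable research plan for an open problem, not a proof.
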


Note that the ``only if" part follows from Theorem B.
We recall that the $3$-groups of maximal class (as well as the $p$-groups of maximal class with an abelian maximal  subgroup for any prime $p$)  were classified by Blackburn \cite{bla}. However, it does not seem easy to prove that they possess an irreducible character that vanishes at exactly $3^n-3^{n-1}+6$ elements. Eamonn O'Brien has checked that this is true for groups of order at most $3^{10}$.

\medskip

As we have mentioned,  the converse of Theorem B does not hold for $p>3$. This situation is related to \cite{ms}.  In \cite{ms} it was proved that the number of conjugacy classes of zeros of any non-linear irreducible character of a $p$-group is at least $p^2-1$ (see Theorem C of \cite{ms}). Furthermore, if equality holds and the character is faithful then $G$ is a $p$-group of maximal class with an abelian maximal subgroup $U$ and the set of zeros of the character is $(G-U)\cup(\bZ_2(G)-\bZ(G))$ (see the proof of Theorem C of \cite{ms} and the paragraph that follows it). Now, we  make clear the relation between both problems. 
Note that this relation is only transparent after proving Theorem \ref{odd}.

\begin{thm}
\label{rel}
Let $G$ be a non-abelian $p$-group of order $p^n$ and $\chi\in\Irr(G)$ faithful. Then $\chi$ vanishes at exactly $p^n-p^{n-1}+p^2-p$ elements if and only if $\chi$ vanishes at exactly $p^2-1$ conjugacy classes.
\end{thm}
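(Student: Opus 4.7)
The plan is to reduce both conditions to the single structural assertion that the zero set of $\chi$ equals $Z:=(G-U)\cup(\bZ_2(G)-\bZ(G))$, where $G$ is a $p$-group of maximal class with maximal abelian subgroup $U$. Theorem~\ref{odd} will supply one direction: if $\chi$ vanishes on exactly $p^n-p^{n-1}+p^2-p$ elements then $\chi^{-1}(0)=Z$. The converse direction will be supplied by the analogous statement from \cite{ms} recalled earlier in this section (Theorem~C there together with the paragraph following its proof): if $\chi$ is faithful and vanishes on exactly $p^2-1$ conjugacy classes, then again $\chi^{-1}(0)=Z$. Thus both hypotheses will become equivalent to the zero set of $\chi$ being $Z$, and the theorem will follow once one verifies that this specific set has $p^n-p^{n-1}+p^2-p$ elements and exactly $p^2-1$ conjugacy classes of $G$.

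The element count is immediate from $|\bZ(G)|=p$ and $|\bZ_2(G)|=p^2$. For the class count I would split $Z$ into $\bZ_2(G)-\bZ(G)$ and $G-U$. For any $x\in\bZ_2(G)-\bZ(G)$, the commutator map $g\mapsto[x,g]$ is a nontrivial homomorphism $G\to\bZ(G)$, hence surjective, so $x^G=x\bZ(G)$ is a full coset of size $p$ and $\bZ_2(G)-\bZ(G)$ contributes exactly $p-1$ classes.

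For $G-U$ the plan is to show that every $g\in G-U$ has $|C_G(g)|=p^2$, which will give $(p^n-p^{n-1})/p^{n-2}=p^2-p$ classes of size $p^{n-2}$. Fix $g\in G-U$; since $G=\langle g\rangle U$ with $U$ abelian and normal, the map $\phi_g\colon U\to U$, $u\mapsto[g,u]$, is a homomorphism with kernel $C_U(g)$. Expanding arbitrary commutators using $G=\langle g\rangle U$, together with the $\langle g\rangle$-stability of $[g,U]$, will yield $G'=[g,U]$. Since $G$ has maximal class of order $p^n$, $|G'|=p^{n-2}$, so
\[
|C_U(g)|=\frac{|U|}{|[g,U]|}=\frac{p^{n-1}}{p^{n-2}}=p,
\]
forcing $C_U(g)=\bZ(G)$. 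Using $C_G(g)=\langle g\rangle C_U(g)$ together with $|\langle g\rangle:\langle g\rangle\cap U|=p$, I will conclude that $|C_G(g)|=p\cdot|C_U(g)|=p^2$. Summing the two contributions, $Z$ will contain exactly $(p-1)+(p^2-p)=p^2-1$ conjugacy classes.

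The hard part will be the identity $G'=[g,U]$ for $g\in G-U$; combined with the maximal-class equality $|G'|=p^{n-2}$ this is what pins down $|C_U(g)|$ uniformly on $G-U$ and then gives $|C_G(g)|=p^2$. The boundary case $n=3$, where $G$ is extraspecial and $Z=G-\bZ(G)$, will be handled directly by the standard structure of extraspecial groups.
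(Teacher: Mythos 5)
Your proposal is correct and follows essentially the same route as the paper: both directions are reduced, via Theorem~\ref{odd} and the quoted result from \cite{ms}, to the zero set being $(G-U)\cup(\bZ_2(G)-\bZ(G))$, after which one counts elements and conjugacy classes. The only cosmetic difference is that you obtain $|\bC_G(g)|=p^2$ for $g\in G-U$ by way of $G'=[g,U]$ and $|G'|=p^{n-2}$, whereas the paper notes directly that $\bC_U(g)$ is centralized by $\langle g\rangle U=G$ and hence equals $\bZ(G)$; both are valid.
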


\begin{proof}
This is clear if $n=3$ so we may assume that $n>3$.

Suppose first that $\chi$ vanishes at exactly $p^2-1$ conjugacy classes. As we have just mentioned, then $G$ is a $p$-group of maximal class with an 
abelian maximal subgroup $U$ and the set of zeros of the $\chi$ is $(G-U)\cup(\bZ_2(G)-\bZ(G))$. Since the cardinality of this set is  $p^n-p^{n-1}+p^2-p$ the result follows.

Conversely, assume that $\chi$ vanishes at exactly $p^n-p^{n-1}+p^2-p$ elements. By Theorem \ref{odd}, $G$ is a $p$-group of maximal class with an abelian maximal subgroup $U$ and the set of zeros of the character is $(G-U)\cup(\bZ_2(G)-\bZ(G))$.
Let $g\in G-U$, so that $G=\langle g\rangle U$. Since $|\bZ(G)|=p$, $\bC_U(g)=\bZ(G)$, so $|\bC_G(g)|=p^2$. In other words, 
 the conjugacy classes in $G-U$ have size $p^{n-2}$. Therefore, the number of conjugacy classes of $G$ contained in $G-U$ is $(p^n-p^{n-1})/p^{n-2}=p^2-p$. Since $|\bZ_2(G)|=p^2$,   the conjugacy classes in $\bZ_2(G)-\bZ(G)$ have size $p$. Hence, the number of conjugacy classes of $G$ contained in this subset is  $p-1$. It follows that $\chi$ vanishes at exactly $p^2-1$ conjugacy classes, as wanted.
 \end{proof}
 
 Now, we can use Theorem D of \cite{ms} to see that if $p>3$ and equality holds in Theorem B then $|G|$ is bounded in terms of $p$.


\begin{cor}
\label{ms}
Let $G$ be a $p$-group of order $p^n$, where $p>3$. If $G$ has an irreducible character $\chi$ that vanishes at exactly $p^n-p^{n-1}+p^2-p$ elements,
then $|G|\leq p^{r+1}$, where $r$ is the smallest prime that does not divide $p-1$. 
\end{cor}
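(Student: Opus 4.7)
The plan is to reduce the hypothesis of the corollary to the hypothesis of Theorem D of \cite{ms} via the two main results of this section, and then invoke that theorem as a black box.

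First, I would observe that under the equality hypothesis, Theorem \ref{odd}(i) forces $\chi$ to be faithful. This is the key point that makes Theorem \ref{rel} applicable: without faithfulness one cannot directly translate counts of zero elements into counts of zero conjugacy classes using Theorem \ref{rel}.

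Next, having upgraded the data to: \emph{$G$ is a non-abelian $p$-group with a faithful irreducible $\chi$ vanishing on exactly $p^n - p^{n-1} + p^2 - p$ elements}, I would apply Theorem \ref{rel} in the ``only if'' direction (which is the half proved just before the corollary) to conclude that $\chi$ vanishes on exactly $p^2 - 1$ conjugacy classes. At this point the hypothesis of Theorem D of \cite{ms} is met: a $p$-group (with $p > 3$) possessing a faithful irreducible character with exactly $p^2 - 1$ classes of zeros.

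Finally, Theorem D of \cite{ms} yields the bound $|G| \leq p^{r+1}$ with $r$ the smallest prime not dividing $p-1$, which is exactly the conclusion. I expect no real obstacle here; the content of the corollary is concentrated in (a) recognizing that the hypothesis of Theorem B in the equality case is strong enough to force faithfulness, which is already done in Theorem \ref{odd}, and (b) the nontrivial work in \cite{ms} that produces the explicit bound from the class-count hypothesis. The only thing to double-check is that the case $n = 3$ (where Theorem \ref{rel} is trivial and $G$ is extraspecial) is covered by the bound, which it is since $r \geq 2$ and so $p^{r+1} \geq p^3$.
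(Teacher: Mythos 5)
Your proposal matches the paper's proof exactly: faithfulness from Theorem \ref{odd}, translation to $p^2-1$ conjugacy classes of zeros via Theorem \ref{rel}, and then Theorem D of \cite{ms} as a black box. The extra sanity check on $n=3$ is a harmless addition; nothing further is needed.
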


\begin{proof}
By Theorem \ref{odd}, we know that $\chi$ is faithful.  Now, by Theorem \ref{rel}, $\chi$ vanishes at exactly $p^2-1$ conjugacy classes and the result follows from Theorem D of \cite{ms}.
\end{proof}

%

Let us summarize. If $G$ is a non-abelian group, and ${\rm mz}(G)$ is the
minimum number of elements of $G$ taking the zero value among the
non-linear irreducible
characters of  $G$, we let 
  $${\rm mz}(p^n)={\rm min} \{ {\rm mz}(G) \mid |G|=p^n \}\, .$$
  We have shown in Theorem B that ${\rm mz}(p^n) \le p^n-p^{n-1} + p^2-p$, and in Theorem A that equality holds if $p=2$. (We suspect that the same holds if $p=3$.)
  Also the proof of Theorem B and computer calculations performed by O'Brien suggest that the following could be true.
  
\begin{con}
\label{mz}
Let  $G$ be a $p$-group of order $p^n$. Then  ${\rm mz}(p^n)={\rm mz}(G)$ if and only if $G$ has maximal class with an abelian maximal normal subgroup.   
\end{con}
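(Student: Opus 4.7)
The plan is to split the conjectured equivalence into the ``if'' and ``only if'' directions and to reduce each to character-theoretic computations on specific families of $p$-groups. Write $m(p,n)$ for the value conjectured to equal ${\rm mz}(p^n)$; from Theorem B we have $m(p,n)\le p^n-p^{n-1}+p^2-p$, with equality when $p=2$ (Theorem \ref{m2}) and presumably when $p=3$ (Conjecture \ref{3max}), but the example $|G|=5^5$ shows that for $p\ge 5$ the inequality can be strict.

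For the ``if'' direction, let $G$ be a $p$-group of maximal class of order $p^n$ with abelian maximal subgroup $U$ and fix $x\in G$ with $G=\langle x\rangle U$. Every non-linear $\chi\in\Irr(G)$ has degree $p$ and is of the form $\lambda^G$ for a linear $\lambda\in\Irr(U)$ that is not $G$-invariant; such a $\chi$ vanishes on $G-U$, and its zeros in $U$ are precisely the elements $g$ with $\sum_{i=0}^{p-1}\lambda^{x^i}(g)=0$. Using Blackburn's classification \cite{bla} together with the explicit action of $x$ on $U$, one hopes to evaluate this character sum, minimize over $\lambda$, and show the result depends only on $p$ and $n$. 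This would generalize the computation carried out in the proof of Theorem \ref{m2}.

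The ``only if'' direction is the main obstacle. When $m(p,n)=p^n-p^{n-1}+p^2-p$, Theorem \ref{odd} immediately identifies the extremal $G$, so the argument is complete for $p=2$ and will be complete for $p=3$ once Conjecture \ref{3max} is settled. For $p\ge 5$, Theorem \ref{odd} cannot be invoked directly, and one must show, for every $G$ not of the extremal form, that every non-linear $\chi\in\Irr(G)$ has strictly more than $m(p,n)$ zeros. A natural strategy is induction on $|G|$ combined with the induction formula $\chi=\theta^G$ exploited throughout Section 2: if $\chi$ is induced from $\theta\in\Irr(M)$ for a maximal subgroup $M\le G$, the zeros of $\chi$ include all of $G-\bigcup_g M^g$ together with inductive contributions from inside $M$, and one must argue that any departure from the extremal structure—too many distinct maximal subgroups inducing $\chi$, a non-abelian $M$, or an inductive bound on $M$ that is not attained—forces a definite positive loss.

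The hardest part is quantifying this loss uniformly across all non-extremal configurations. Corollary \ref{ms} already shows via Theorem \ref{rel} that for $p\ge 5$ the equality case of Theorem \ref{odd} cannot occur in large $p$-groups, so for large $n$ the value $m(p,n)$ is attained only by degree-$p$ characters whose zero-count strictly exceeds $p^n-p^{n-1}+p^2-p$. Identifying the correct invariant that both rules out every $G$ outside Blackburn's list and matches the value on that list is, we expect, the real crux of the problem, and it appears to require ideas beyond the monomial/induction techniques developed in this section.
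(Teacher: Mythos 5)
This statement is labeled as a \emph{conjecture} in the paper, and the paper offers no proof of it --- only the remark that Theorem B, together with computer calculations of O'Brien for small orders, suggests it. Your submission is likewise not a proof: it is a research plan, and by your own admission (``one hopes to evaluate this character sum,'' ``it appears to require ideas beyond the monomial/induction techniques developed in this section'') neither direction is actually established. Concretely: the ``if'' direction reduces to an unperformed evaluation of the sums $\sum_{i=0}^{p-1}\lambda^{x^i}(g)$ over Blackburn's classification, and even if that computation were carried out and shown to depend only on $p$ and $n$, it would only prove that all groups on Blackburn's list share a common value of ${\rm mz}$; it would not show that this common value equals ${\rm mz}(p^n)$, which is exactly the content of the ``only if'' direction that you identify as the crux and do not address. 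So there is no step to check for correctness --- the entire argument is missing, in both directions.

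Two smaller points. First, the inequality you quote from Theorem B is reversed: Theorem B gives a \emph{lower} bound on the number of zeros of every non-linear irreducible character, hence ${\rm mz}(p^n)\ge p^n-p^{n-1}+p^2-p$ (the example of order $5^5$, where the minimum is $2600>2520$, confirms that the bound is a lower bound that can be strict; the paper's summary paragraph contains the same slip). Second, your assertion that for large $n$ and $p\ge 5$ the minimum is ``attained only by degree-$p$ characters'' does not follow from Corollary \ref{ms}: that corollary only rules out the exact equality case of Theorem \ref{odd} in large groups, and says nothing about the degree of a character realizing ${\rm mz}(G)$ when the count strictly exceeds $p^n-p^{n-1}+p^2-p$. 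Your diagnosis of where the difficulty lies is accurate and consistent with the paper's own discussion, but the statement remains open.
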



\section{Groups with a Sylow tower}

We conclude with the proof of Theorem C, which we restate.
Our interest now also includes roots of unity values of characters.

\begin{thm}
Let $G$ be a group with a Sylow tower and let $\chi\in\Irr(G)$. Then the proportion of elements $x\in G$ such that $\chi(x)=0$ or $\chi(x)$ is a root of unity is at least $1/2$. 
\end{thm}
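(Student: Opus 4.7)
The plan is to argue by induction on $|G|$. Linear characters take root-of-unity values everywhere, so the claim is trivial; assume $\chi$ is non-linear. When $G$ is a $p$-group, Theorem B produces at least $p^n - p^{n-1} + p^2 - p = (p-1)(p^{n-1}+p)$ zeros of $\chi$, and a direct check gives $(p-1)(p^{n-1}+p) \geq p^n/2$ for every prime $p \geq 2$, so the base case is settled. Hence assume $G$ is not a $p$-group and the result holds for every smaller group with a Sylow tower.

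Since $G$ has a Sylow tower, $G$ is solvable and we may choose a normal subgroup $M \triangleleft G$ of prime index $q$; the subgroup $M$ then inherits a Sylow tower. Applying Clifford's theorem to $M$, one of two cases holds: either $\chi_M$ is a sum of $q$ distinct pairwise $G$-conjugate irreducible characters of $M$, or $\chi_M = \eta$ is irreducible and $G$-invariant. In the first case $\chi = \psi^G$ for some constituent $\psi \in \Irr(M)$, and since $M \triangleleft G$, the character $\chi$ vanishes on all of $G \setminus M$, which has size $|G|(q-1)/q \geq |G|/2$; the conclusion follows immediately.

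In the second case, because $G/M$ is cyclic of prime order, $\eta$ extends to some $\tilde\eta \in \Irr(G)$ (via Isaacs 11.22), and Gallagher's theorem gives $\chi = \tilde\eta \cdot \mu$ for some linear character $\mu$ of $G/M$. Since $\mu$ is root-of-unity-valued, $\chi(x)$ is zero or a root of unity if and only if $\tilde\eta(x)$ is, so it suffices to establish the claim for $\tilde\eta$. The inductive hypothesis applied to $(M,\eta)$ yields at least $|M|/2 = |G|/(2q)$ good elements of $M$, where $\tilde\eta|_M = \eta$; the remaining good elements must come from $G \setminus M$.

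The main obstacle is this final step. A natural estimate is the column-orthogonality bound: the $q$ Gallagher extensions of $\eta$ each contribute $|\tilde\eta(g)|^2$ to $\sum_{\psi \in \Irr(G)}|\psi(g)|^2 = |\cent{G}{g}|$, so $|\tilde\eta(g)|^2 \leq |\cent{G}{g}|/q$ for $g \in G \setminus M$; moreover every such $g$ satisfies $q \mid o(g)$, forcing $|\cent{G}{g}| \geq q$. Combining such modulus bounds with Kronecker's theorem (an algebraic integer all of whose Galois conjugates have absolute value at most one is zero or a root of unity) should squeeze $\tilde\eta(g)$ into the zero-or-root-of-unity regime on enough elements of the non-identity cosets. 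Alternatively, one can replace $M$ by a minimal normal subgroup—necessarily elementary abelian in a solvable group—so that $\eta$ is linear and the extension $\tilde\eta$ becomes more transparent coset by coset via the action of $G/M$ on $M$. Piecing these estimates together into a uniform $|G|/2$ bound is the most delicate part of the argument.
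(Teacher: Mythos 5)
Your base case ($p$-groups via Theorem B) and your first Clifford case ($\chi$ induced from a normal subgroup of prime index $q$, hence vanishing on the $|G|(q-1)/q\ge |G|/2$ elements outside it) are both correct. But the heart of the theorem is the remaining case, where $\chi$ restricts irreducibly to the chosen normal subgroup, and there your argument has a genuine gap that you yourself flag: induction on $M$ controls only $|M|/2=|G|/(2q)$ elements, and you have no working mechanism for the $q-1$ nontrivial cosets of $M$. The tools you propose do not close it. Column orthogonality over the $q$ Gallagher extensions gives only $|\tilde\eta(g)|^2\le |\cent{G}{g}|/q$, which need not be $\le 1$; and Kronecker's theorem requires \emph{all} Galois conjugates of $\tilde\eta(g)$ to have absolute value at most $1$, which is not established (this is essentially why Thompson's argument, which does invoke deep number theory, only yields the fraction $1/3$). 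Passing to a minimal normal subgroup does not help either, since the values of the extension on the other cosets remain uncontrolled.

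The missing idea is to choose the normal subgroup so that induction can be applied \emph{on every coset}, not just on the subgroup itself. Since $G$ has a Sylow tower, pick a prime $p$ for which $G$ has a normal Hall $p'$-subgroup $N$, so $G/N\in\Syl_p(G)$ up to isomorphism. By the relative $M$-character theorem (Theorem 6.22 of Isaacs), either $\chi$ is induced from a proper subgroup containing $N$ (which is normal, since $G/N$ is a $p$-group, and the induced case applies), or $\theta=\chi_N\in\Irr(N)$. In the latter case one works with the canonical extension $\hat\theta$ of $\theta$ and the disjoint decomposition $G=\bigcup_{x\in G_p}\cent{N}{x}x$ over $p$-elements $x$ (Lemma 8.18 of Isaacs). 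The Glauberman correspondence gives $\hat\theta(cx)=\pm\theta^*(c)$ for $c\in\cent{N}{x}$, where $\theta^*\in\Irr(\cent{N}{x})$, and each $\cent{N}{x}$ inherits a Sylow tower; applying induction to each $(\cent{N}{x},\theta^*)$ yields at least half of each block $\cent{N}{x}x$, hence at least $|G|/2$ in total for $\hat\theta$. Finally $\chi=\mu\hat\theta$ with $\mu\in\Irr(G/N)$: if $\mu$ is nonlinear the $p$-group case applied to $\mu$ finishes, and if $\mu$ is linear then $\chi(g)$ is zero or a root of unity whenever $\hat\theta(g)$ is. Your coprime/Glauberman step is what replaces the estimates you were unable to assemble.
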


\begin{proof}
We argue by induction on   $|G|$. There exists a prime $p$ that divides $|G|$ and $G$ has a normal Hall $p'$-subgroup $N$. Let $P\in\Syl_p(G)$, so that $G=PN$. Since $G/N$ is a $p$-group, it follows from Theorem 6.22 of \cite{isa} that $\chi$ is a relative $M$-character with respect to $N$. Thus there exists $N\leq H\leq G$ and $\psi\in\Irr(H)$ such that $\chi=\psi^G$ and $\psi_N\in\Irr(N)$. Suppose first that $H<G$. Since $G/N$ is a $p$-group, every maximal subgroup $U$ of $G$ that contains $N$ is normal in $G$. Since $\chi$ is induced $U$, it follows that $\chi$ vanishes on $G-U$. There are at least $|G|/2$ elements in this set. Thus the theorem holds in this case. 

Now, we may assume that $H=G$. In other words, $\theta=\chi_N\in\Irr(N)$. Let $\hat{\theta}$ be the canonical extension of $\theta$ to $G$. We claim that the proportion of zeros and root of unity values of $\hat{\theta}$ exceeds $1/2$.  Let $G_p$ be the set of $p$-elements of $G$. 
Therefore
$$G=\bigcup_{x \in G_p} \cent Nx x $$
is a disjoint union by Lemma 8.18 of \cite{isa}.
Now, if $1 \ne x \in G_p$, $c \in \cent Nx$, and $\theta^* \in \irr{\cent Nx}$ is the $x$-Glauberman correspondent
of $\theta$, we have by Theorem 13.32 of \cite{isa}  that
$$\hat\theta(cx)=\epsilon \theta^*(c)\, ,$$
where $\epsilon$ is a sign.
Since $G$ has a Sylow tower, we have that $\cent Nx$ has a Sylow tower.
Let $A_x$ be the set of elements of $\cent Nx$ where $\theta^*$ has the value zero
or a root of unity. 
value 0 or root of unity.
By induction, we have that
$$|A_x| \ge |\cent Nx|/2 \,  $$
for every $x \in G_p$.
If $y \in A_x$, then $\hat\theta(yx)$ is a zero or a root of unity, and therefore,
$\hat\theta$ has at least
$$\sum_{x \in G_p} |A_x| \ge |G|/2$$
roots of unity or zero values.

Now, by Gallagher's Corollary 6.17 of \cite{isa}, we have that $\chi=\mu\hat{\theta}$, where $\mu\in\Irr(G/N)=\Irr(P)$. If $\mu$ is not linear, then the result follows from the $p$-group case. If $\mu$ is linear, then $|\chi(x)|=|\hat{\theta}(x)|$ and the result follows from Problem 3.2 of \cite{isa} and  the previous paragraph. 
\end{proof} 

It is easy to build examples of nonsolvable groups with irreducible characters that either vanish or take root of unity values at exactly one-half of its elements. Consider for instance $G=S\wr D_{10}$, where $S$ is any simple group. However, if Miller's conjecture is true, then it seems reasonable to expect that if equality holds and $\chi\in\Irr(G)$ is a character that either vanishes or takes a root of unity value at one-half of the elements of $G$, then $\chi$ is monomial of degree $2$ and $G/\ker\chi$ is supersolvable.


\end{document}